\newtheorem{theorem}{Theorem}
\newtheorem{corollary}[theorem]{Corollary}
\newtheorem{lemma}[theorem]{Lemma}
\newtheorem*{theorem*}{Theorem}
\newtheorem*{definition}{Definition}
\theoremstyle{definition}
\newtheorem*{example}{Example}
\newcommand{\Aut}{\operatorname*{Aut}}
\newcommand{\Inn}{\operatorname*{Inn}}
\begin{document}

\title{Growing words in \\ the free group on two generators}

\author{Bobbe Cooper}
\address{
	School of Mathematics \\
	206 Church St. S.E. \\
	Minneapolis, MN 55455, USA
}

\author{Eric Rowland}
\address{
	LaCIM \\
	Universit\'e du Qu\'ebec \`a Montr\'eal \\
	Montr\'eal, QC H2X 3Y7, Canada
}



\thanks{
We thank Dennis Garity for many comments and suggestions throughout the development of this paper.
We thank the referee for a careful reading and several good suggestions.
}
\thanks{
This work was supported in part by NSF grant DMS 0139678.
}

\begin{abstract}
This paper is concerned with minimal-length representatives of equivalence classes of words in $F_2$ under $\Aut F_2$.
We give a simple inequality characterizing words of minimal length in their equivalence class.
We consider an operation that ``grows'' words from other words, increasing the length, and we study \emph{root words} --- minimal words that cannot be grown from other minimal words.
Root words are ``as minimal as possible'' in the sense that their characterization is the boundary case of the minimality inequality.
The property of being a root word is respected by equivalence classes, and the length of each root word is divisible by $4$.
\end{abstract}

\maketitle
\markboth{Cooper and Rowland}{Growing words in the free group on two generators}

\section{Introduction}\label{introduction}

In 1936 J. H. C. Whitehead~\cite{Whitehead 1936a, Whitehead 1936b} exhibited a finite set of \emph{Whitehead automorphisms} with the property that if two elements $w$ and $v$ in the free group $F_n$ are equivalent under an automorphism of $F_n$ and $v$ is of minimal length in its equivalence class, then $v$ is the image of $w$ under a sequence of Whitehead automorphisms.
Furthermore, the words obtained after applying each automorphism in this sequence are strictly decreasing in length until the minimal length is attained, after which the automorphisms leave the length fixed.
Because the set of Whitehead automorphisms is finite, this theorem provides an algorithmic foundation for determining equivalence of elements in $F_n$.

In this paper we use the Whitehead automorphisms to study minimal words in $F_2$.
We begin in this section by introducing our notation and defining the two types of Whitehead automorphisms.
Section~\ref{minimal words} uses subword counts to establish a simple characterization of minimal words in $F_2$.
In Section~\ref{root words} we introduce the notion of a root word.
Root words are ``as minimal as possible'' in a sense that we make precise.
We prove a characterization of root words analogous to our characterization of minimal words, and this allows us to derive several properties of root words and their equivalence classes.

Although the results of the paper concern elements in $F_2$, we establish notation for $F_n$ in order to state Whitehead's theorem in generality.
Throughout the paper we will use $a$ and $b$ as the generators of $F_2$, and we will denote the inverse of a generator $x$ by $\overline{x}$.
The set of \emph{letters} --- the generators and their inverses --- is referred to as the alphabet $L_n$.
Thus $L_2 = \{a,b,\overline{a},\overline{b}\}$.
We will use $x$ and $y$ as variables over $L_n$, where usually $x \not \in \{y, \overline{y}\}$.

A \emph{word} is a finite sequence of letters.
We identify each element $w \in F_n$ with its word on $L_n$ with no adjacent inverses, so that $F_n$ consists of all words $w_1 \cdots w_\ell$ such that $w_i \neq \overline{w_{i+1}}$ for $1 \leq i \leq \ell - 1$.
The length $\ell$ of $w$ is denoted $|w|$.

We are interested in equivalence classes of words in $F_n$ under the automorphism group $\Aut F_n$.
We write $w \sim v$ if $\phi(w) = v$ for some automorphism $\phi \in \Aut F_n$.

\begin{example}
The set
\begin{align*}
\{
&
aa,
bb,
\overline{aa},
\overline{bb},
abab,
abb\overline{a},
a\overline{b}a\overline{b},
a\overline{bb}\overline{a},
baa\overline{b},
baba,
b\overline{a}b\overline{a},
b\overline{aa}\overline{b},
\overline{a}bba,
\\
&
\overline{a}b\overline{a}b,
\overline{a}\overline{b}\overline{a}\overline{b},
\overline{a}\overline{bb}a,
\overline{b}aab,
\overline{b}a\overline{b}a,
\overline{b}\overline{aa}b,
\overline{b}\overline{a}\overline{b}\overline{a},
aabaab,
aabab\overline{a},
\dots
\}
\end{align*}
is an equivalence class of words in $F_2$ under $\Aut F_2$.
\end{example}

For our purposes it frequently suffices to divide $\Aut F_n$ by the subgroup $\Inn F_n$ of inner automorphisms.
Let $C_n$ be the set of words $w = w_1 \cdots w_\ell \in F_n$ such that $w_\ell \neq \overline{w_1}$.
Each word in $C_n$ is a minimal-length representative of an equivalence class of $F_n$ modulo $\Inn F_n$.
We refer to such an equivalence class as a \emph{cyclic word}.
Most cyclic words have several minimal-length representatives.
For example, the words $abab$ and $baba$ are representatives of the same cyclic word.

Whitehead's theorem distinguishes two types of automorphisms.
A \emph{Type I automorphism} is an automorphism $\phi \in \Aut F_n$ which permutes $L_n$.
Note that a permutation $\phi$ of $L_n$ can be extended to a Type I automorphism if $\overline{\phi(y)} = \phi(\overline{y})$ for all $y \in L_n$.
We will refer to Type I automorphisms as \emph{permutations}.

Let $x \in L_n$ and $A \subset L_n \backslash \{x, \overline{x}\}$.
Define a map $\phi : L_n \to F_n$ by
\[
	\phi(y) = \overline{x}^{\chi(\overline{y} \in A)} y x^{\chi(y \in A)},
\]
where $\chi(\textsf{true}) = 1$ and $\chi(\textsf{false}) = 0$.
One checks that $\phi(y)^{-1} = \phi(\overline{y})$ and that this map extends to an automorphism $\phi \in \Aut F_2$.
We write $\phi = (A,x)$ and call $\phi$ a \emph{Type II automorphism}.

A \emph{one-letter automorphism} is a Type II automorphism $\phi=(A,x)$ where the set $A$ contains only one element.
The automorphism $(\{y\},x)$ maps $x \mapsto x$, $\overline{x} \mapsto \overline{x}$, $y \mapsto yx$, and $\overline{y} \mapsto \overline{x}\overline{y}$.
We will see in Section~\ref{minimal words} that in a sense one-letter automorphisms suffice for $F_2$.

\begin{example}
The one-letter automorphism $(\{a\},b)$ maps $a \mapsto ab$ and $\overline{a} \mapsto \overline{b}\overline{a}$ and leaves $b,\overline{b}$ fixed.
\end{example}

\begin{definition}
A word $w \in F_n$ is \emph{minimal} if $|w| \leq |\phi(w)|$ for all $\phi \in \Aut F_n$.
\end{definition}

\begin{example}
Let $\phi = (\{a\},b) \in \Aut F_2$.
We have $\phi(b\overline{a}b\overline{a}) = b\overline{b}\overline{a}b\overline{b}\overline{a} = \overline{aa}$.
Therefore the word $b\overline{a}b\overline{a}$ is not minimal.
On the other hand, $\overline{aa}$ is minimal since it is not equivalent to the empty word or to any of the four words of length $1$.
\end{example}

If $w \in F_n$ and $w$ is minimal then in fact $w \in C_n$, since there is an inner automorphism that has the effect of moving the final letter of $w$ to the front.

We now state Whitehead's theorem formally.
Whitehead's proof is topological.
Rapaport~\cite{Rapaport} later provided an algebraic proof, which was simplified by Higgins and Lyndon~\cite{Higgins--Lyndon}.

\begin{theorem*}[Whitehead]
If $w,v \in F_n$ such that $w \sim v$ and $v$ is minimal, then there exists a sequence $\phi_1, \phi_2, \dots, \phi_m$ of Type I and Type II automorphisms such that
\begin{itemize}

\item
$\phi_m \cdots \phi_2 \phi_1(w) = v$ and

\item
for $0 \leq k \leq m - 1$, $|\phi_{k+1} \phi_k \cdots \phi_2 \phi_1(w)| \leq |\phi_k \cdots \phi_2 \phi_1(w)|$, with strict inequality unless $\phi_k \cdots \phi_2 \phi_1(w)$ is minimal.

\end{itemize}
\end{theorem*}

Note that since permutations do not affect the length of a word, the automorphisms $\phi_1, \phi_2, \dots, \phi_{m-1}$ can be taken to be Type II automorphisms.

The theorem immediately suggests several algorithms.
To determine whether $w \in F_n$ is minimal, it suffices to individually apply all Type II automorphisms to $w$; then $w$ is minimal if and only if the image under each automorphism is not shorter than $w$.
If $w$ is not minimal, then one may find the set of minimal words equivalent to $w$ by repeating this process; for each image that is shorter than $w$, apply all Type II automorphisms to it, keeping the images that are shorter still, until no shorter words are produced.
To determine whether two words $w$ and $v$ are equivalent, one may simply compute the set of minimal words equivalent to each and check whether they are the same.
Finally, given $\ell \geq 0$, one may compute all equivalence classes of $F_n$ containing a minimal word of length $\ell$ by first identifying which length-$\ell$ words on the alphabet $L_n$ are minimal, and then classifying them into equivalence classes.

\section{Minimal words}\label{minimal words}

A central theme of the area is that information about the equivalence class of a word can be obtained from statistics of its (contiguous) subwords.
This idea has been successfully employed in a number of recent papers on equivalence class size.
Myasnikov and Shpilrain~\cite{Myasnikov--Shpilrain} characterized the possible orbit sizes of $w \in F_n$ under an automorphism $\phi$, and for $F_2$ they showed that the number of minimal words equivalent to a minimal word $w$ is bounded by a polynomial in $|w|$.
Lee showed that the same is true for minimal $w \in F_n$ under a local condition on $w$~\cite{Lee 2006a} and determined the degree of this polynomial~\cite{Lee 2006b}.
These results imply upper bounds on the time required to determine whether two words $w, v \in F_n$ are equivalent.
For $F_2$, Khan~\cite{Khan} analyzed the structure of equivalence classes and showed that the running time is at most quadratic in $\max(|w|,|v|)$.
All these results make use of counting various subwords of $w$.

In this section we give a new characterization of minimal words in $F_2$.
Our characterization also depends on subword counts.
If $w = w_1 \cdots w_\ell$ and $v$ are nonempty words on $L_2$ such that $k = |v| \leq |w| = \ell$, let $[v]_w$ denote the number of (possibly overlapping) occurrences of the subword $v$ in $w_1 \cdots w_\ell w_1 \cdots w_{k-1}$.
If $|v| > |w|$, let $[v]_w = 0$.
Note that if $w$ and $w'$ are two minimal-length representatives of a cyclic word then $[v]_w = [v]_{w'}$.
Let $(v)_w = [v]_w + [v^{-1}]_w$ denote the total number of occurrences of $v$ and $v^{-1}$.

As we will see shortly, length-$2$ subwords are particularly important, since they track the effects of one-letter automorphisms.

\begin{example}
Let $w = aa\overline{bb}\overline{a}ba\overline{b}a$.
The length-$2$ subword counts are $(aa)_w = 2$, $(bb)_w = 1$, $(ab)_w = 1 = (ba)_w$, and $(a\overline{b})_w = 2 = (\overline{b}a)_w$.
\end{example}

\begin{theorem}\label{practicality}
If $w \in C_2$ and $x,y \in L_2$, then $(xy)_w = (yx)_w$.
\end{theorem}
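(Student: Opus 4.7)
Since the set of two-letter subwords of $w$ is independent of the choice of (minimal-length) representative in its $\sim_I$-class, I may work with a cyclically reduced representative $w_1 w_2 \cdots w_n$, viewed cyclically. Two of the four cases of the identity are immediate: $(xx)_w = (xx)_w$ is trivial, and if $y = \overline{x}$ then both $(xy)_w$ and $(yx)_w$ equal zero, because the observation on neighbors of strings forbids any adjacency $z\overline{z}$ in a cyclically reduced word. So the substantive case is $\{x,\overline{x}\} \cap \{y,\overline{y}\} = \emptyset$; I verify $(xy)_w = (yx)_w$ for one such pair (the others being symmetric).

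The main idea is to interpret $w$ as a closed directed walk on the four-vertex digraph with vertex set $L_2$, where each position $i$ contributes the edge $(w_i, w_{i+1})$ (indices taken mod $n$). Since this walk is closed, in-degree equals out-degree at every vertex. Writing this identity at vertex $x$, after canceling the $\{xx\}_w$ contribution that appears on both sides and noting that $\{\overline{x}x\}_w = 0$ by cyclic reducedness, yields
\begin{equation*}
\{yx\}_w + \{\overline{y}x\}_w \;=\; \{xy\}_w + \{x\overline{y}\}_w.
\end{equation*}
Writing it at vertex $\overline{y}$ likewise gives
\begin{equation*}
\{x\overline{y}\}_w + \{\overline{x}\overline{y}\}_w \;=\; \{\overline{y}x\}_w + \{\overline{y}\overline{x}\}_w.
\end{equation*}
Adding these two equations, the cross-terms $\{\overline{y}x\}_w$ and $\{x\overline{y}\}_w$ cancel from both sides, and the result rearranges to $\{xy\}_w + \{\overline{y}\overline{x}\}_w = \{yx\}_w + \{\overline{x}\overline{y}\}_w$, which is exactly $(xy)_w = (yx)_w$.

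The only real obstacle is clerical: choosing the right two vertices at which to apply the flow-conservation identity so that the unwanted terms cancel when the equations are summed. Once the closed-walk viewpoint is in place, the computation is mechanical bookkeeping on a four-vertex graph.
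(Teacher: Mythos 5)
Your proof is correct and is essentially the paper's argument in different clothing: the paper's two displayed equations are precisely the in-degree-equals-out-degree balance identities (phrased as ``each entrance into a $y$-string, resp.\ $\overline{x}$-string, is followed by an exit''), and the paper likewise finishes by adding the two equations so the cross terms cancel. Your only deviations --- stating the balance at the vertices $x$ and $\overline{y}$ rather than $y$ and $\overline{x}$, and making the cyclic-reducedness and digraph viewpoint explicit --- are cosmetic.
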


\begin{proof}
We may assume $x \notin \{y,\overline{y}\}$ since otherwise the theorem is trivial.
Each occurrence of the subword $xy$ or $\overline{x}y$ in $w$ is followed by either $yx$ or $y\overline{x}$ (with some intermediate $y^\ell$).
Similarly, each occurrence of the subword $y\overline{x}$ or $\overline{yx}$ in $w$ is followed by $\overline{x}y$ or $\overline{xy}$ (with some intermediate $\overline{x}^\ell$).
Therefore
\begin{align*}
	[xy]_w + [\overline{x}y]_w &= [yx]_w + [y\overline{x}]_w, \\
	[y\overline{x}]_w + [\overline{yx}]_w &= [\overline{x}y]_w + [\overline{xy}]_w.
\end{align*}
By definition, $[xy]_w + [\overline{yx}]_w = (xy)_w$ and $[yx]_w + [\overline{xy}]_w = (yx)_w$.
Add the equations above and use these relations to obtain $(xy)_w = (yx)_w$.
\end{proof}

Note that the Type II automorphism $(\{y,\overline{y}\},x)$ with $x \notin \{y,\overline{y}\}$ is an inner automorphism on $F_2$, since it conjugates $x$ by $y$ and also (trivially) conjugates $y$ by $y$.
Consequently, automorphisms of this form do not decrease the length of any word in $C_2$.
The Type II automorphism $(\{\},x)$ is the identity map and thus does not change the length of a word in $C_2$.
Therefore by Whitehead's theorem it suffices to consider one-letter automorphisms when determining the minimality of a word in $C_2$.
This allows us to prove the following, a special case of a theorem of Rapaport~\cite[Theorem~7]{Rapaport}.
A related result was given by Khan~\cite[Corollary 2.21]{Khan}.

\begin{lemma}\label{Rapaport}
Let $w \in C_2$.
Then $w$ is minimal if and only if for all $x,y \in L_2$ with $x \notin \{y,\overline{y}\}$ we have $(y\overline{x})_w \leq (yx)_w + (yy)_w$.
\end{lemma}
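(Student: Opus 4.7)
The plan is to combine Whitehead's theorem with the observation preceding the lemma, reducing the minimality test to length comparisons under one-letter automorphisms $S = (\{y\}, x)$ with $x \notin \{y, \overline{y}\}$. Permutations preserve length, and any Type II automorphism $(A, x)$ on $F_2$ has $A \subseteq \{y, \overline{y}\} = L_2 \setminus \{x, \overline{x}\}$; the full set $A = \{y, \overline{y}\}$ is inner (hence length-preserving) by the paragraph just before the lemma, so the only remaining Type II possibilities are the one-letter automorphisms. Thus $w$ is minimal if and only if $|S(w)| \geq |w|$ for every one-letter $S$.

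Next, I would compute $|S(w)| - |w|$ directly. Under $S = (\{y\}, x)$, every letter $y$ in $w$ is replaced by $yx$, every $\overline{y}$ by $\overline{x}\overline{y}$, and $x, \overline{x}$ are fixed; so $S$ inserts $N_y + N_{\overline{y}}$ new letters, where $N_z = \{z\}_w$ counts single letters. Then I would locate the resulting cancellations by scanning two-letter subwords of a cyclically reduced representative of $w$: the only places where an inserted $x$ can meet an $\overline{x}$ are at the subwords $y\overline{x}$ (the appended $x$ meets the following $\overline{x}$) and $x\overline{y}$ (the fixed $x$ meets the prepended $\overline{x}$). Each such event cancels exactly one $x\overline{x}$ pair and leaves the adjacent letters (one of $y, \overline{y}$ alongside an $x$-free neighbor) in a reduced configuration, so no chain reaction occurs. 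This yields
\[
|S(w)| - |w| = (N_y + N_{\overline{y}}) - 2\bigl(\{y\overline{x}\}_w + \{x\overline{y}\}_w\bigr) = (N_y + N_{\overline{y}}) - 2(y\overline{x})_w.
\]

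Finally, I would rewrite $N_y + N_{\overline{y}}$ in the $(\cdot)_w$ notation. In a cyclically reduced word the only possible successors of $y$ are $y, x, \overline{x}$, giving $N_y = \{yy\}_w + \{yx\}_w + \{y\overline{x}\}_w$; dually, treating $\overline{y}$ as the second letter of a subword, $N_{\overline{y}} = \{\overline{y}\overline{y}\}_w + \{x\overline{y}\}_w + \{\overline{x}\overline{y}\}_w$. Adding and grouping into $(\cdot)_w$ classes produces $N_y + N_{\overline{y}} = (yy)_w + (yx)_w + (y\overline{x})_w$, so
\[
|S(w)| - |w| = (yy)_w + (yx)_w - (y\overline{x})_w.
\]
Requiring this to be nonnegative for all admissible pairs $x, y$ is exactly the stated inequality.

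The step I expect to be the main obstacle is the cancellation analysis in the second paragraph: verifying that cancellation at a $y\overline{x}$ or $x\overline{y}$ boundary is always a single $x\overline{x}$ pair and never propagates into the surrounding letters. Once this is in place, the rest is the bookkeeping identity for $N_y + N_{\overline{y}}$, which is routine.
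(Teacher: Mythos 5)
Your proposal is correct and follows essentially the same route as the paper: reduce to the one-letter automorphisms $(\{y\},x)$ via Whitehead's theorem and the inner-automorphism observation, then compute the length change of $w$ by local bookkeeping at the two-letter subwords, with additions at $yx$, $\overline{x}\overline{y}$, $yy$, $\overline{y}\overline{y}$ and cancellations at $y\overline{x}$, $x\overline{y}$. Your version merely repackages the count (total insertions $N_y + N_{\overline{y}}$ minus twice the cancelled pairs) and makes explicit the non-propagation of cancellations, a point the paper leaves implicit.
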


\begin{proof}
The automorphism $(\{y\},x)$ causes cancellations in $w$ only between the two letters of the subwords $y\overline{x}$ and $x\overline{y}$.
The total number of cancellations is therefore $(y\overline{x})_w$.
Similarly $(\{y\},x)$ causes additions to $w$ only between the two letters of the subwords $yx$, $\overline{xy}$, $yy$, and $\overline{yy}$, totaling $(yx)_w + (yy)_w$.
Thus, the left side of the inequality is the number of cancellations in $w$ under the automorphism $(\{y\},x)$, while the right side is the number of additions.
The word $w$ is minimal if and only if the inequality holds for each choice of $x, y$.
\end{proof}

There are eight one-letter automorphisms $(\{y\},x)$ with $x \notin \{y,\overline{y}\}$.
Each of these can be written as the product $(\{y\},x) = (\{y,\overline{y}\},x) (\{ \overline{y}\},\overline{x})$ of an inner automorphism and another one-letter automorphism, so after reducing modulo $\Inn F_2$ there are four distinct one-letter automorphisms.
Therefore we have the following.

\begin{lemma}\label{one-letter automorphisms}
Let $w \in C_2$.
Then $w$ is minimal if and only if none of the four one-letter automorphisms $(\{a\},b)$, $(\{a\},\overline{b})$, $(\{b\},a)$, and $(\{b\},\overline{a})$ decrease the length of $w$.
More generally, $w$ is minimal if and only if none of the automorphisms $(\{y\},x)$, $(\{y\},\overline{x})$, $(\{x\},y)$, and $(\{x\},\overline{y})$ decrease the length of $w$ for some $x \notin \{y,\overline{y}\}$.
\end{lemma}

Lemma~\ref{one-letter automorphisms} provides the following characterization of minimal words.
Sanchez~\cite{Sanchez} also gave a characterization, although in our opinion the following is simpler to state.

\begin{theorem}\label{minimality}
Let $w \in C_2$.
The following are equivalent.
\begin{enumerate}
\item
$w$ is minimal.
\item
$|(ab)_w - (a\overline{b})_w| \leq \min((aa)_w, (bb)_w)$.
\item
$|(yx)_w - (y\overline{x})_w| \leq \min((xx)_w, (yy)_w)$ for some $x, y \in L_2$ with $x \notin \{y,\overline{y}\}$.
\end{enumerate}
\end{theorem}

\begin{proof}
There are eight inequalities to be considered in Lemma~\ref{Rapaport} over $L_2$; applying Lemma~\ref{one-letter automorphisms} to these reduces the number of distinct inequalities to four, giving that $w$ is minimal if and only if the inequalities
\begin{align*}
	(ab)_w &\leq (a\overline{b})_w + (aa)_w \\
	(a\overline{b})_w &\leq (ab)_w + (aa)_w \\
	(ba)_w &\leq (b\overline{a})_w + (bb)_w \\
	(b\overline{a})_w &\leq (ba)_w + (bb)_w
\end{align*}
hold.
This set of inequalities is equivalent to the two inequalities
\begin{align*}
	|(ab)_w - (a\overline{b})_w| &\leq (aa)_w \\
	|(ba)_w - (b\overline{a})_w| &\leq (bb)_w,
\end{align*}
and the result follows from Theorem~\ref{practicality}.
The same argument goes through with $a, b$ replaced by any $x, y$.
\end{proof}

\begin{corollary}
Let $w$ be a word on the alphabet $\{a, b\}$.
Then $w$ is minimal if and only if $[ab]_w \leq \min([aa]_w, [bb]_w)$.
\end{corollary}

\begin{corollary}\label{same initial letter}
If $w$ and $v$ are minimal words with the same initial letter, then $wv$ is minimal.
\end{corollary}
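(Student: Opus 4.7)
My plan is to reduce the minimality of $wv$ to Theorem~\ref{minimality} applied separately to $w$ and $v$, using the fact that concatenating two cyclically reduced words with the same initial letter just adds their multisets of two-letter subwords.

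First I would fix reduced representatives: since $|w|$ and $|v|$ denote minimal lengths in their $\sim_I$ classes, I choose representatives $w = w_1 w_2 \cdots w_n$ and $v = v_1 v_2 \cdots v_m$ of that minimal length, so $w_n \neq \overline{w_1}$ and $v_m \neq \overline{v_1}$. Assuming $w_1 = v_1$, the juncture subword of $wv$ is $w_n v_1 = w_n w_1$, which does not cancel, and the cyclic wrap is $v_m w_1 = v_m v_1$, which does not cancel either. Hence $wv$ is already of minimal length in $[wv]$.

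Next I would check that, as multisets, the cyclic two-letter subwords satisfy
\[
    \{\text{two-letter subwords of } wv\} = \{\text{two-letter subwords of } w\} \uplus \{\text{two-letter subwords of } v\}.
\]
Indeed, passing from the two cycles $w$ and $v$ to the single cycle $wv$ removes the wraparound subwords $w_n w_1$ and $v_m v_1$, and adds the two new subwords $w_n v_1$ and $v_m w_1$; since $w_1 = v_1$ these pairs agree, and the total count $(st)_{wv} = (st)_w + (st)_v$ for every $s,t \in L_2$.

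Finally I would apply Theorem~\ref{minimality} to both $w$ and $v$: for any admissible $x,y$,
\[
    |(yx)_w - (y\overline{x})_w| \leq \min((xx)_w, (yy)_w), \qquad |(yx)_v - (y\overline{x})_v| \leq \min((xx)_v, (yy)_v).
\]
Adding these with the triangle inequality and combining with the elementary bound $\min(a,b) + \min(c,d) \leq \min(a+c, b+d)$ yields
\[
    |(yx)_{wv} - (y\overline{x})_{wv}| \leq \min((xx)_{wv}, (yy)_{wv}),
\]
and Theorem~\ref{minimality} then gives that $wv$ is minimal. The only step requiring care is the subword bookkeeping at the juncture; once that is verified, the rest is a straightforward combination of the triangle inequality and the superadditivity of $\min$.
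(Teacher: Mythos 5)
Your proof is correct and follows essentially the same route as the paper: the inequality of Theorem~\ref{minimality} holds for $w$ and $v$ separately and passes to $wv$ by addition, because with matching initial letters the cyclic two-letter subword counts satisfy $(st)_{wv} = (st)_w + (st)_v$. Your explicit bookkeeping at the juncture (checking $wv$ is cyclically reduced and that the wraparound subwords $w_n w_1$, $v_m v_1$ reappear as $w_n v_1$, $v_m w_1$), together with the superadditivity of $\min$, simply spells out what the paper's one-line proof leaves implicit.
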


\begin{proof}
The inequality of Theorem~\ref{minimality} holds for $w$ and for $v$.
Since $(xy)_{wv} = (xy)_w + (xy)_v$, this gives
\begin{align*}
	|(ab)_{wv} - (a\overline{b})_{wv}|
	&= |(ab)_w - (a\overline{b})_w + (ab)_v - (a\overline{b})_v| \\
	&\leq \min((aa)_w, (bb)_w) + \min((aa)_v, (bb)_v) \\
	&\leq \min((aa)_{wv}, (bb)_{wv}). \qedhere
\end{align*}
\end{proof}

The converse is not true in general.
For example, if $w = a$ and $v = abb$, then $wv$ is minimal but $v$ is not.

\section{Root words}\label{root words}

A special case of Corollary~\ref{same initial letter} was observed by Virnig~\cite{Virnig}, namely that if $w$ is a minimal word whose initial letter is $a$, then $aw$ is also minimal.
More generally, since Theorem~\ref{minimality} characterizes minimal words as words having sufficiently many subwords of the form $x^2$, increasing the length of a subword $x^\ell$ in a minimal word produces another minimal word.
This motivates the following definition.

\begin{definition}
Let $w = w_1 \cdots w_\ell \in C_2$ be a nonempty word, and let $1 \leq i \leq \ell$.
The word $w_1 \cdots w_{i-1} w_i w_i w_{i+1} \cdots w_\ell$ obtained by duplicating the letter $w_i$ is a \emph{child} of $w$, and $w$ is one of its \emph{parents}.
Furthermore, we define each letter $x \in L_2$ to be a child of the empty word.
\end{definition}

Thus a child of a minimal word is necessarily minimal.
A word in general does not have a unique parent; for example, the parents of $aabb\overline{aa}bb$ are $abb\overline{aa}bb$, $aab\overline{aa}bb$, $aabb\overline{a}bb$, and $aabb\overline{aa}b$.

Words which cannot be grown as children of minimal words are essentially new minimal words with respect to the operation of duplicating a letter.

\begin{definition}
A \emph{root word} is a minimal word that is not a child of any minimal word.
\end{definition}

\begin{example}
The word $aabb$ is a root word, since it is minimal and neither of its parents $abb$ and $aab$ is minimal.
The words $aba\overline{b}$ and $ab\overline{a}\overline{b}$ are also root words; they are minimal by Theorem~\ref{minimality} and are not children of any minimal word, since in particular they have no subword $xx$.
\end{example}

Root words are the words for which Theorem~\ref{minimality} holds for equality.

\begin{theorem}\label{root word}
Let $w \in C_2$.
The following are equivalent.
\begin{enumerate}
\item
$w$ is a root word.
\item
$|(ab)_w - (a\overline{b})_w| = (aa)_w = (bb)_w$.
\item
$|(yx)_w - (y\overline{x})_w| = (yy)_w = (xx)_w$ for some $x, y \in L_2$ with $x \notin \{y,\overline{y}\}$.
\end{enumerate}
\end{theorem}

\begin{proof}
By Theorem~\ref{minimality}, we have $|(ab)_w - (a\overline{b})_w| \leq (aa)_w$ and $|(ab)_w - (a\overline{b})_w| \leq (bb)_w$.
A minimal word $w$ is a root word if and only if replacing any $xx$ by $x$ in $w$ causes the word to lose minimality.
Shortening such a subword corresponds to decrementing $(aa)_w$ or $(bb)_w$ (or more generally $(yy)_w$ or $(xx)_w$), so therefore $w$ is a root word precisely when both inequalities hold for equality.
\end{proof}

From this characterization we can derive several properties of root words.

\begin{corollary}\label{no inverses}
Let $w$ be a word on the alphabet $\{a, b\}$.
Then $w$ is a root word if and only if $[ab]_w = [aa]_w = [bb]_w$.
\end{corollary}

The analogue of Corollary~\ref{same initial letter} does not hold for root words in general.
For example, $w = aabb$ and $v = aaba\overline{b}a\overline{bb}$ are root words, but $wv$ is not a root word.
However, we do have the following.

\begin{corollary}
Let $n \geq 1$ and $w \in C_2$.
Then $w$ is a root word if and only if $w^n$ is a root word.
\end{corollary}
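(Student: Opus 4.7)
The plan is to reduce the claim to a single scaling identity for two-letter subword counts, namely that $(xy)_{w^n} = n \cdot (xy)_w$ for every $x, y \in L_2$ and every $n \geq 1$. Once this is established, both the minimality inequality of Theorem~\ref{minimality} and the root-word equalities of Theorem~\ref{root word} scale by a common factor of $n$, so (since $n \geq 1$) they hold for $w$ if and only if they hold for $w^n$.

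To justify the scaling identity I would pass to a minimal-length representative of the $\sim_I$ class of $w$. Such a representative is automatically cyclically reduced, since otherwise an inverse pair could be canceled between the initial and final letters to produce a shorter word in the same class. Two-letter subword counts are invariants of the $\sim_I$ class, so it suffices to compute them on this fixed representative. Concatenating $n$ copies of a cyclically reduced word produces no cancellation at the seams, so the result has length $n |w|$, is itself cyclically reduced, and traversing it cyclically visits each two-letter adjacency of $w$ (including the wrap-around one) exactly $n$ times. This yields $(xy)_{w^n} = n (xy)_w$.

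Combining these observations: since the scaling preserves both sides of the minimality inequality up to the factor $n$, $w$ is minimal iff $w^n$ is minimal; and since it likewise preserves both sides of the root-word equality $|(ab)_w - (a\overline{b})_w| = (aa)_w = (bb)_w$, these equalities hold for $w$ iff they hold for $w^n$. Theorem~\ref{root word} then gives $w$ is a root word if and only if $w^n$ is a root word.

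The only step requiring any real care is the no-cancellation-at-the-seams claim, which rests on the fact that minimal-length representatives of $\sim_I$ classes are cyclically reduced; beyond this, the corollary is an immediate application of the two previous theorems.
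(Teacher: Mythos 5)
Your proof is correct and takes essentially the same route as the paper, which simply asserts that multiplying $w$ by itself (and taking $n$th roots) preserves the equalities of Theorem~\ref{root word}; your explicit scaling identity $(xy)_{w^n} = n\,(xy)_w$, justified via a cyclically reduced minimal representative, is exactly the fact the paper's two-sentence argument leaves implicit.
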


\begin{proof}
Multiplying $w$ by itself preserves equality in Theorem~\ref{root word}, since all terms scale by the same factor.
Likewise, taking the $n$th root of $w^n$ preserves equality by dividing all terms by $n$.
\end{proof}

\begin{corollary}
If $w$ is a root word, then $(a)_w = (b)_w = |w|/2$.
\end{corollary}

\begin{proof}
The only length-$2$ subwords with unequal generator weights are $aa$, $\overline{aa}$, $bb$, and $\overline{bb}$, but $(aa)_w = (bb)_w$.
\end{proof}

\begin{theorem}\label{divisibility by four}
If $w$ is a root word, then $|w|$ is divisible by $4$.
\end{theorem}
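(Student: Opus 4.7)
The plan is to express $|w|$ in terms of the two-letter subword counts $(xy)_w$, apply the root-word equalities, and finish with a parity argument.

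First, I would observe that the $|w|$ two-letter subwords of $w$ (read cyclically, since we are working mod $\sim_I$) fall into twelve non-canceling types, which pair up under inversion into six counts:
\[|w| = (aa)_w + (bb)_w + (ab)_w + (ba)_w + (a\overline{b})_w + (\overline{a}b)_w.\]
Theorem \ref{practicality} gives $(ab)_w = (ba)_w$, and applying it with $x=\overline{a},\,y=b$ together with the definitional symmetry $(b\overline{a})_w = (a\overline{b})_w$ yields $(\overline{a}b)_w = (a\overline{b})_w$. Hence
\[|w| = (aa)_w + (bb)_w + 2(ab)_w + 2(a\overline{b})_w.\]

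For a root word Theorem \ref{root word} gives $(aa)_w = (bb)_w$, so
\[|w| = 2\left[(aa)_w + (ab)_w + (a\overline{b})_w\right].\]
A direct tally of the subwords contributing to each count on the right shows that the bracket equals $(a)_w$: the subwords starting with $a$ contribute exactly $\{a\}_w$ and the subwords ending with $\overline{a}$ contribute exactly $\{\overline{a}\}_w$. So $|w| = 2(a)_w$, and it remains to show $(a)_w$ is even.

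For this I would invoke the remaining root-word equality $|(ab)_w - (a\overline{b})_w| = (aa)_w$ and split into cases. If $(ab)_w \geq (a\overline{b})_w$, substituting $(ab)_w = (aa)_w + (a\overline{b})_w$ into $(a)_w = (aa)_w + (ab)_w + (a\overline{b})_w$ yields $(a)_w = 2\left[(aa)_w + (a\overline{b})_w\right]$; the reverse case is symmetric and gives $(a)_w = 2\left[(aa)_w + (ab)_w\right]$. Either way $(a)_w$ is even, so $|w| = 2(a)_w$ is divisible by $4$. The main obstacle is purely bookkeeping in the first step, namely correctly partitioning the twelve non-canceling subword types into the six $(xy)_w$ counts via Theorem \ref{practicality}; once $|w| = 2(a)_w$ is established, the defining equalities of a root word dispatch the parity in a two-case check.
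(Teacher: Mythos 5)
Your proposal is correct and takes essentially the same route as the paper: the same cyclic two-letter subword decomposition of $|w|$, the same use of Theorem~\ref{practicality} and the root-word equalities of Theorem~\ref{root word}, and the same two-case substitution of $|(ab)_w - (a\overline{b})_w| = (aa)_w$ at the end. Your intermediate identity $|w| = 2(a)_w$ is a correct but inessential detour (it is the paper's corollary $(a)_w = (b)_w$ in disguise); the paper instead substitutes the equality directly into $|w| = 2\left((aa)_w + (ab)_w + (a\overline{b})_w\right)$ to get $4(ab)_w$ or $4(a\overline{b})_w$.
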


\begin{proof}
We have $|w| = (aa)_w + (bb)_w + (ab)_w + (ba)_w + (a\overline{b})_w + (\overline{b}a)_w$ because these subwords and their inverses constitute the set of length-$2$ subwords.
By Theorem~\ref{practicality} and Theorem~\ref{root word} this simplifies to
\begin{align*}
	|w| &= 2 (aa)_w + 2 (ab)_w + 2 (a\overline{b})_w \\
	&= 2 |(ab)_w - (a\overline{b})_w| + 2 (ab)_w + 2 (a\overline{b})_w.
\end{align*}
If $(ab)_w \geq (a\overline{b})_w$ then $|w| = 4(ab)_w$, and if $(ab)_w < (a\overline{b})_w$ then $|w| = 4 (a\overline{b})_w$.
\end{proof}

For $w$ nonempty, let $\lambda(w) = \max \{ \ell \geq 1 : \text{$(x^\ell)_w \neq 0$ for some $x \in L_2$} \}$ be the length of the longest subword of the form $x^\ell$.
For example, $\lambda(aa\overline{bb}\overline{a}ba\overline{b}a) = 3$.

\begin{theorem}
If $w$ is a nonempty root word, then $\lambda(w) \leq \frac{|w|}{4} + 1$.
\end{theorem}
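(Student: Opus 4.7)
The plan is to relate $\lambda(w)$ to $(aa)_w$ (or $(bb)_w$) via elementary subword counting, and then use the root word equalities of Theorem~\ref{root word} together with the identity $|w|=4\max((ab)_w,(a\overline{b})_w)$ extracted from the proof of Theorem~\ref{divisibility by four}.

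First, I would fix a letter $x$ that realizes the longest string, and observe that a maximal $x$-string of length $\ell$ accounts for exactly $\ell-1$ occurrences of the two-letter subword $xx$ inside it. Hence, applied to a longest string of length $\lambda(w)$ in $w$, this gives either $(aa)_w \geq \lambda(w)-1$ or $(bb)_w \geq \lambda(w)-1$. But for a root word Theorem~\ref{root word} asserts $(aa)_w = (bb)_w$, so in either case
\[
(aa)_w \geq \lambda(w) - 1.
\]

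Next, I would combine the two equalities in Theorem~\ref{root word} to get
\[
(aa)_w = |(ab)_w - (a\overline{b})_w| \leq \max\bigl((ab)_w,(a\overline{b})_w\bigr),
\]
the second step being trivial since $(ab)_w$ and $(a\overline{b})_w$ are nonnegative. Finally, I would quote the computation in the proof of Theorem~\ref{divisibility by four}, which shows that for a root word $|w| = 4\max((ab)_w,(a\overline{b})_w)$, so $\max((ab)_w,(a\overline{b})_w) = |w|/4$. Chaining the three inequalities yields $\lambda(w) - 1 \leq (aa)_w \leq |w|/4$, which is exactly the claim.

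I do not expect any real obstacle here; the only care needed is in the opening step, to confirm that a \emph{maximal} $x$-string of length $\ell$ contributes $\ell-1$ (and not $\ell$) occurrences of $xx$ even when $w$ is viewed cyclically mod $\sim_I$. This is fine for any nontrivial root word, because the corollary $(a)_w=(b)_w$ (proved just after Theorem~\ref{root word}) guarantees that both generators actually appear, so every $x$-string is genuinely bounded on both sides by a letter of the opposite type.
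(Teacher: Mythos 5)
Your proposal is correct and follows essentially the same route as the paper: bound $\lambda(w) \leq (aa)_w + 1$ via the $xx$-subword count of the longest string, then show $(aa)_w \leq |w|/4$ using the root word equalities and the length computation from the proof of Theorem~\ref{divisibility by four}. The only cosmetic difference is that you pass through $|w| = 4\max((ab)_w,(a\overline{b})_w)$ and $|(ab)_w-(a\overline{b})_w| \leq \max((ab)_w,(a\overline{b})_w)$, whereas the paper uses $|w| = 2((aa)_w+(ab)_w+(a\overline{b})_w) \geq 4(aa)_w$ via the triangle inequality --- an equivalent manipulation; your extra care about the cyclic maximal-string count is a welcome detail the paper leaves implicit.
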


\begin{proof}
We have $(ab)_w + (a\overline{b})_w \geq |(ab)_w - (a\overline{b})_w| = (aa)_w$.
From the proof of Theorem~\ref{divisibility by four} we therefore obtain $|w| = 2 \left((aa)_w + (ab)_w + (a\overline{b})_w\right) \geq 4 (aa)_w$, so $\lambda(w) \leq \max((aa)_w, (bb)_w) + 1 = (aa)_w + 1 \leq \frac{|w|}{4} + 1$.
\end{proof}

Furthermore, for $n \geq 1$ there exists a root word of length $4n$ that achieves $\lambda(w) = \frac{|w|}{4} + 1$, namely $a^{n+1}(ba)^{n-1}b^{n+1}$, which can be shown to be a root word by Corollary~\ref{no inverses}.

We next prove several lemmas, with the goal of showing in Theorem~\ref{root classes} that a minimal word that is equivalent to a root word is also a root word.
The first lemma describes the effect of a one-letter automorphism on a word.

\begin{lemma}\label{cyclic word}
Let $w \in F_2$, and let $\phi = (\{y\},x)$ with $x \notin \{y,\overline{y}\}$.
Let $v = \phi(w)$; then
\begin{align*}
	(yy)_v &= (y\overline{x}y)_w \\
	(yx)_v &= (yx)_w + (yy)_w \\
	(y\overline{x})_v &= (y\overline{x})_w - (y\overline{x}y)_w \\
	(xx)_v &= (yx)_w - (yx\overline{y})_w + (xx)_w - (y\overline{xx})_w.
\end{align*}
Also, $(yy)_v -(xx)_v = (y\overline{x})_w - (yx)_w - (xx)_w$.
\end{lemma}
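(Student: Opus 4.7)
The plan is to understand $v=S(w)$ through a per-letter contribution picture, then read off each equation by bookkeeping on short subwords of $w$.

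The automorphism $S=(\{y\},x)$ acts letter-by-letter as $y\mapsto yx$, $\overline{y}\mapsto\overline{x}\overline{y}$, $x\mapsto x$, $\overline{x}\mapsto\overline{x}$, and $v$ is the free reduction of the concatenation of these images. My first step is to locate where the reduction actually does any work. Inspecting first and last letters of images against the reducedness of $w$ shows that the only adjacent-image boundaries that cancel are at $y\overline{x}$ subwords of $w$ (giving $yx\cdot\overline{x}=y$) and at $x\overline{y}$ subwords (giving $x\cdot\overline{x}\overline{y}=\overline{y}$). A further case check of which letters can appear adjacent to a surviving $y$ or $\overline{y}$ confirms these cancellations do not cascade. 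This gives a clean per-letter rule: $y$ contributes $yx$, or just $y$ when followed by $\overline{x}$; $\overline{y}$ contributes $\overline{x}\overline{y}$, or just $\overline{y}$ when preceded by $x$; $x$ contributes $x$, or the empty word when followed by $\overline{y}$; $\overline{x}$ contributes $\overline{x}$, or empty when preceded by $y$.

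With this picture, each of the four counts reduces to tracing short patterns in $w$. For $(yy)_v$, the letter $y$ inside $v$ appears only as the first letter of a $yx$ contribution (followed internally by $x$) or as a standalone contribution, so a $yy$ in $v$ forces a standalone $y$ (i.e.\ $y\overline{x}$ in $w$) followed by a contribution beginning with $y$ (next $w$-letter is $y$); this is exactly $y\overline{x}y$ in $w$, with the inverse pattern accounting for $\overline{y}\overline{y}$. For $(yx)_v$, every $y$ in $w$ not followed by $\overline{x}$ supplies an internal $yx$ and no other occurrences arise (no contribution starts with $x$), giving $\{yx\}_v=\{yy\}_w+\{yx\}_w$; a symmetric count handles $\{\overline{x}\overline{y}\}_v$. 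For $(y\overline{x})_v$, a standalone $y$ followed by a contribution beginning with $\overline{x}$ forces the letter after the $\overline{x}$ in $w$ to be $\overline{y}$ or $\overline{x}$, producing $\{y\overline{x}\}_w-\{y\overline{x}y\}_w$. For $(xx)_v$, each adjacent $xx$ traces back to a trailing $x$ of a $yx$ or an $x$ contribution, followed by an $x$-starting contribution, which can only be an $x$ in $w$ not followed by $\overline{y}$; collecting the triples $yxz$ and $xxz$ with $z\ne\overline{y}$ together with their inverses yields the stated formula.

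The final identity falls out of the first and fourth. Combining them gives
\[
(yy)_v-(xx)_v = (y\overline{x}y)_w+(yx\overline{y})_w+(y\overline{xx})_w-(yx)_w-(xx)_w,
\]
so it suffices to prove $(y\overline{x}y)_w+(yx\overline{y})_w+(y\overline{xx})_w=(y\overline{x})_w$. Each occurrence of $y\overline{x}$ in $w$ is followed cyclically by exactly one of $y,\overline{y},\overline{x}$, and each $x\overline{y}$ is preceded by exactly one of $\overline{y},y,x$; partitioning the six three-letter substrings on the left by whether they start with $y\overline{x}$ or end with $x\overline{y}$ recovers $\{y\overline{x}\}_w+\{x\overline{y}\}_w=(y\overline{x})_w$.

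The chief obstacle is simply keeping the bookkeeping honest. Two places call for care: remembering that every $(\cdot)_w$ bundles a subword with its inverse, so that contributions to both halves are tracked; and verifying the non-cascading of cancellations, where a priori one might worry about long $\overline{x}$-strings or interleaved $y\overline{x}$ and $x\overline{y}$ patterns. Once the per-letter contribution rule is in hand, each of the five equations collapses to a short accounting exercise.
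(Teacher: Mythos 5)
Your proposal is correct and follows essentially the same route as the paper: it tracks how each two-letter subword count changes under $(\{y\},x)$ by locating the only cancellation sites ($y\overline{x}$ and $x\overline{y}$) and doing the resulting bookkeeping, with the final identity obtained by combining the first and fourth equalities via the decomposition $(y\overline{x})_w = (y\overline{x}y)_w + (yx\overline{y})_w + (y\overline{xx})_w$. Your write-up is just a more explicit version (per-letter contribution rule, non-cascading check) of the paper's terser argument, so no substantive difference.
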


\begin{proof}
The subword $(yy)_w$ only appears in $v$ as a result of cancellations in $(y \overline{x} y)_w$ in $w$, which yields the first equality.
The subword $(yx)_w$ remains fixed under $\phi$, and also arises in $v$ from $(yy)_w$ in $w$, which yields the second equality.
Next, $(y \overline{x})_w$ remains fixed under $\phi$, unless it appears in $(y \overline{x} y)_w$, which yields the third equality.
Next, $(xx)_w$ remains fixed under $\phi$ unless it appears in $(y\overline{xx})_w$, and also arises in $v$ from $(yx)_w$, unless this $(yx)_w$ appears in $(y \overline{xy})_w$ in $w$, which yields the fourth equality.
For the final equality, subtract the first two equalities and use the facts that $(yx\overline{y})_w = (y\overline{xy})_w$ (since $(yx\overline{y})^{-1} = y\overline{xy}$) and that $(y\overline{x})_w = (y\overline{x}y)_w + (y\overline{xy})_w + (y\overline{xx})_w$.
\end{proof}

\begin{definition}
An automorphism $\phi$ is \emph{level} on a word $w \in F_2$ if $|\phi(w)| = |w|$.
\end{definition}

For example, each Type I automorphism (permutation) is level on each word in $F_2$.
The following lemma characterizes level one-letter automorphisms on a word.

\begin{lemma}\label{level automorphism}
Let $w \in F_2$ and $x \notin \{y,\overline{y}\}$.
Then the automorphism $(\{y\},x)$ is level on $w$ if and only if $(y\overline{x})_w = (yx)_w + (yy)_w$.
\end{lemma}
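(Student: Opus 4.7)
The plan is to directly read off the length change from the accounting already done in the proof of Lemma~\ref{Rapaport}. There, applying $S = (\{y\},x)$ to $w$ was shown to produce cancellations exactly at the two-letter subwords $y\overline{x}$ and $x\overline{y}$ (for a total of $(y\overline{x})_w$ cancellations) and additions exactly at the subwords $yx$, $\overline{xy}$, $yy$, and $\overline{yy}$ (for a total of $(yx)_w + (yy)_w$ additions). Each addition lengthens the word by one letter and each cancellation shortens it by one letter, so
\[
    |S(w)| \;=\; |w| + \bigl( (yx)_w + (yy)_w \bigr) - (y\overline{x})_w.
\]
Therefore $|S(w)| = |w|$ if and only if $(y\overline{x})_w = (yx)_w + (yy)_w$, which is by definition the condition for $S$ to be level on $w$.

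I do not expect any real obstacle. The substantive work --- identifying which two-letter subwords contribute additions and which contribute cancellations under $(\{y\},x)$ --- has already been carried out in Lemma~\ref{Rapaport}; the present lemma simply extracts the equality case of that inequality. A useful sanity check is that the same length change can be computed a second way: the letter-by-letter substitution $y \mapsto yx$ and $\overline{y} \mapsto \overline{x}\overline{y}$ adds one letter for every $y$ or $\overline{y}$ in $w$ (giving $(y)_w$ letters) while each cancellation event removes two letters (giving $-2(y\overline{x})_w$), and this agrees with the formula above via the identity $(y)_w = (yy)_w + (yx)_w + (y\overline{x})_w$ that one gets by counting the possible right neighbors of $y$ and $\overline{y}$ in a reduced cyclic word.
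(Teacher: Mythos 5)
Your proof is correct and follows essentially the same route as the paper: it reuses the addition/cancellation count from the proof of Lemma~\ref{Rapaport} (cancellations $(y\overline{x})_w$, additions $(yx)_w + (yy)_w$) and observes that levelness is exactly the equality case. The extra sanity check via $(y)_w = (yy)_w + (yx)_w + (y\overline{x})_w$ is a harmless, correct aside.
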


\begin{proof}
As in the proof of Lemma~\ref{Rapaport}, the automorphism $(\{y\},x)$ causes cancellations in $w$ only in the subwords $y\overline{x}$ and $x\overline{y}$.
The total number of cancellations is therefore $(y\overline{x})_w$.
Similarly, $(\{y\},x)$ causes additions to $w$ in the subwords $yx$, $\overline{xy}$, $yy$, and $\overline{yy}$, totaling $(yx)_w + (yy)_w$.
We have that $(\{y\}, x)$ is level on $w$ if and only if the number of additions it causes is equal to the number of cancellations it causes, so $(y\overline{x})_w = (yx)_w + (yy)_w$.
\end{proof}

\begin{lemma}
If $w$ is a root word and $\phi$ is a one-letter automorphism which is level on $w$, then $\phi(w)$ is a root word.
\end{lemma}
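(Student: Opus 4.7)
The plan is to verify the equality characterization of Theorem \ref{root word} directly for $v := S(w)$, using the formulas of Lemma \ref{cyclic word} together with what the hypotheses ``root'' and ``level'' force on the subword counts of $w$.

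Write $S = (\{y\},x)$ with $x \notin \{y,\overline{y}\}$. First I would combine the root-word equality $|(yx)_w - (y\overline{x})_w| = (yy)_w = (xx)_w$ from Theorem \ref{root word} with the level identity $(y\overline{x})_w - (yx)_w = (yy)_w$ from Lemma \ref{level automorphism}. The level identity pins down the sign of the difference inside the absolute value, and together they simplify to
\[
    (y\overline{x})_w - (yx)_w = (yy)_w = (xx)_w.
\]

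Next I would feed this into Lemma \ref{cyclic word}. The last identity there, $(yy)_v - (xx)_v = (y\overline{x})_w - (yx)_w - (xx)_w$, collapses to $0$ by the displayed equation, so $(yy)_v = (xx)_v$ is immediate. For the remaining piece, a short rearrangement of the formulas for $(y\overline{x})_v$ and $(yx)_v$ in Lemma \ref{cyclic word}, using the same displayed equation to kill the bracketed term $(y\overline{x})_w - (yx)_w - (yy)_w$, yields $(y\overline{x})_v - (yx)_v = -(y\overline{x}y)_w = -(yy)_v$. Hence $|(yx)_v - (y\overline{x})_v| = (yy)_v = (xx)_v$, and Theorem \ref{root word} applied in the $x, y$ variables certifies $v$ as a root word.

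The whole argument reduces to routine bookkeeping with the formulas of Lemma \ref{cyclic word}. The one conceptual step, and the only place where any thought is required, is realizing at the outset that the level hypothesis automatically resolves the absolute value in the root-word equality; this removes any need for a case split on the sign of $(yx)_w - (y\overline{x})_w$ and makes the algebra on the $v$-side collapse cleanly in a single pass.
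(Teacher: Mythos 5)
Your proposal is correct and follows essentially the same route as the paper's proof: it combines the last identity of Lemma~\ref{cyclic word} with the root-word equality $(yy)_w = (xx)_w$ and the level identity $(y\overline{x})_w = (yx)_w + (yy)_w$ to get $(yy)_v = (xx)_v$, and then rearranges the formulas for $(yx)_v$ and $(y\overline{x})_v$ to obtain $|(yx)_v - (y\overline{x})_v| = (y\overline{x}y)_w = (yy)_v$, concluding via Theorem~\ref{root word}. Your sign bookkeeping (noting that the level hypothesis fixes the sign of $(y\overline{x})_w - (yx)_w$) is if anything slightly cleaner than the paper's, which absorbs a sign slip into the absolute value.
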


\begin{proof}
Let $\phi = (\{y\}, x)$ and $v = \phi(w)$.
By Lemma~\ref{cyclic word}, $(yy)_v - (xx)_v = (y\overline{x})_w - (yx)_w - (xx)_w$.
Since $w$ is a root word, $(yy)_w = (xx)_w$ from Theorem~\ref{root word}; and $(y\overline{x})_w = (yx)_w + (yy)_w$ from Lemma~\ref{level automorphism}, so $(y\overline{x})_w - (yx)_w - (xx)_w = 0$, which means that $(yy)_v = (xx)_v$.
By Lemma~\ref{cyclic word}, $|(yx)_v - (y\overline{x})_v| = |(yx)_w + (yy)_w - (y\overline{x})_w - (y\overline{x}y)_w| = |{-}(y\overline{x}y)_w| = (y\overline{x}y)_w = (yy)_v$.
Therefore, by Theorem~\ref{root word}, $v$ is a root word.
\end{proof}

\begin{theorem}\label{root classes}
If $w$ is a root word, $w \sim v$, and $|w| = |v|$, then $v$ is a root word.
\end{theorem}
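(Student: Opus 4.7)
The plan is to apply Whitehead's Theorem to obtain a sequence of automorphisms carrying $w$ to $v$ in which every step is level, and then to verify that root-wordness is preserved under each such step, reducing essentially to the immediately preceding lemma.

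Since $w$ is a root word it is in particular minimal, and $|w|=|v|$ forces $v$ to be minimal as well. Whitehead's Theorem then produces a sequence $S_1,\ldots,S_m$ of Type~I and Type~II automorphisms with $S_m\cdots S_1(w)=v$, whose intermediate lengths are weakly decreasing and strictly decreasing until minimality is reached. Because $w$ is already minimal and $|v|=|w|$, no strict decrease can occur, so each $S_k$ is level on $S_{k-1}\cdots S_1(w)$. It therefore suffices to prove the local statement: if $u$ is a root word and $S$ is a Type~I or Type~II automorphism that is level on $u$, then $S(u)$ is a root word.

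For a Type~I automorphism $\sigma$, applying $\sigma$ merely relabels the two-letter subword counts consistently with the action of $\sigma$ on $L_2$; since the characterization of root words in Theorem~\ref{root word} is symmetric in the choice of $x,y$ with $x\notin\{y,\overline{y}\}$, it transfers directly to $\sigma(u)$. For a Type~II automorphism $S=(A,x)$ in $F_2$ with $x,\overline{x}\notin A$, the set $A$ is necessarily a subset of $\{y,\overline{y}\}$, where $y$ is determined by $\{y,\overline{y}\}=L_2\setminus\{x,\overline{x}\}$. When $A=\emptyset$, $S$ is the identity; when $A=\{y,\overline{y}\}$, $S$ is the inner automorphism noted just before Lemma~\ref{Rapaport} and hence trivial on the class mod $\sim_I$; and when $|A|=1$, $S$ is a one-letter automorphism, and the preceding lemma gives that $S(u)$ is a root word. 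The only real obstacle is being careful enough about the classification of level Type~II automorphisms in $F_2$ to invoke the one-letter lemma in each case; once this is in hand, iterating along the Whitehead sequence completes the proof.
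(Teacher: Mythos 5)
Your proof is correct and follows essentially the same route as the paper: reduce via Whitehead's Theorem to a chain of level automorphisms, dispose of Type~I automorphisms by the symmetry of the characterization in Theorem~\ref{root word}, and invoke the preceding lemma for level one-letter automorphisms. Your explicit classification of level Type~II automorphisms in $F_2$ (empty, inner, or one-letter) is a slightly more careful rendering of the paper's terse appeal to ``one-letter automorphisms, cycles, and permutations,'' but it is the same argument.
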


\begin{proof}
By Whitehead's theorem, $v$ is the image of $w$ under a sequence of one-letter automorphisms, inner automorphisms, and permutations.
Since $|w| = |v|$, each of these automorphisms is level.
By the previous lemma, the image of a root word under a level one-letter automorphism is a root word.
By Theorem~\ref{root word}, the image of a root word under a level inner automorphism or a permutation is also a root word.
Therefore $v$ is a root word.
\end{proof}

In light of this theorem, we refer to an equivalence class containing a root word as a \emph{root class}, since all minimal words in it are root words.
The authors have implemented the algorithm suggested by Whitehead's theorem and have computed all equivalence classes of words in $F_2$ containing a word of length $\leq 20$.
This computation required several months to carry out.
The number of equivalence classes whose minimal words have length $n = 0, 1, 2, \dots$ is
\begin{multline*}
	1, 1, 1, 1, 3, 4, 10, 16, 43, 101, 340, 911, 2544, 7224, 22616, \\
	65376, 187754, 545743, 1653966, 4832057, 14027794, \dots.
\end{multline*}
The number of root classes of each minimal length is
\[
	1, 0, 0, 0, 2, 0, 0, 0, 13, 0, 0, 0, 304, 0, 0, 0, 11395, 0, 0, 0, 478444, \dots.
\]
The explicit equivalence class data is available from the second author's web site, as is the code used to compute it.

We observed above that $aba\overline{b}$ and $ab\overline{a}\overline{b}$ are root words.
We say that a word $w$ is \emph{alternating} if $(xx)_w = 0$ for all $x \in L_2$.
We conclude by showing that alternating root words are characterized by being level under the one-letter automorphisms.
An example of such a word is $(a b \overline{a} \overline{b})^n$ for any $n \geq 0$, which is a root word by Theorem~\ref{root word}.

\begin{theorem}\label{alternating minimal word}
Let $w \in F_2$.
The following are equivalent.
\begin{enumerate}
\item
$w$ is an alternating minimal word.
\item
$w$ is an alternating root word.
\item
The four one-letter automorphisms of Lemma~\ref{one-letter automorphisms} are level on $w$.
\end{enumerate}
\end{theorem}

\begin{proof}
Assume (1).
If $w$ is an alternating minimal word, then $(aa)_w = (bb)_w = 0$, so $(ab)_w = (a\overline{b})_w$ by Theorem~\ref{minimality}.
Therefore, by Theorem~\ref{root word}, $w$ is a root word, so we have (2).

Assume (2) and let $\phi = (\{x\},y)$.
Because $(aa)_w = (bb)_w = 0$ and $(ab)_w = (a\overline{b})_w$, we have $(xy)_w = (x\overline{y})_w$ and $(xx)_w = 0$ for all $x,y \in L_2$, $x \notin \{y,\overline{y}\}$.
Therefore the number of cancellations caused by $\phi$ is equal to the number of additions, and the length of $w$ does not change.
Thus we have (3).

Let all one-letter automorphisms be level on $w$ in accordance with (3).
This implies $(ab)_w - (a\overline{b})_w = (aa)_w$, $(a\overline{b})_w - (ab)_w = (aa)_w$, $(ab)_w - (a\overline{b})_w = (bb)_w$, and $(a\overline{b})_w - (ab)_w = (bb)_w$ by Lemma~\ref{level automorphism}, so $(aa)_w = (bb)_w = 0$ and $(ab)_w = (a\overline{b})_w$.
Therefore $w$ is minimal and alternating, so (1) holds.
\end{proof}

\newpage

\end{document}